\newtheorem{theo}{Theorem}
\newtheorem{cor}[theo]{Corollary}
\newtheorem{prop}[theo]{Proposition}
\def\R{\R}
\def\R{{\mathbb R}}
\def\qed{\hfill $\vcenter{\hrule height .3mm
\hbox {\vrule width .3mm height 2.1mm \kern 2mm \vrule width .3mm
height 2.1mm} \hrule height .3mm}$ \bigskip}
\def\lam{\lambda}
\def\to{\rightarrow}
\newcommand \supp{\operatorname{supp} \,}
\def\pmx{\begin{pmatrix}}
\def\emx{\end{pmatrix}}
\def\Hess{{\nabla^2}}
\def\det{{\rm det}}
\def\supp{{\hbox{supp}}}
\def\Supp{{\hbox{supp}}}
\def\R{\mathbb R}
\begin{document}

\title{Stability results  for some  geometric inequalities and their functional versions
\footnote{Keywords: entropy, divergence, affine isoperimetric inequalities, log Sobolev inequalities. 2010 Mathematics Subject Classification: 46B, 52A20,  60B, 35J}}

\author{Umut Caglar and Elisabeth M.  Werner\thanks { Partially supported by an  NSF grant  } }

\date{}

\maketitle
\begin{abstract}
The Blaschke Santal\'o inequality and the $L_p$ affine isoperimetric inequalities are major inequalities in convex geometry and they have  a wide range of applications.
Functional versions of the Blaschke Santal\'o  inequality  have been established over the years through many contributions. More recently and ongoing, such functional 
versions have been established for the $L_p$ affine isoperimetric inequalities as well. These functional versions involve  notions from information theory, like entropy and 
divergence. 
\par
We  list  stability versions for  the geometric inequalities as well as for their functional counterparts. Both are  known for the Blaschke Santal\'o inequality.
Stability versions for the $L_p$ affine isoperimetric inequalities  in the case of convex bodies have only been known in all dimensions for $p=1$ and for $p > 1$ only  for convex bodies in the plane. Here, we prove almost optimal   stability results for the $L_p$ affine isoperimetric inequalities, for all $p$, for all  convex bodies, for all dimensions. 
Moreover, we give stability versions for the corresponding functional versions of the $L_p$ affine isoperimetric inequalities, namely the  reverse log Sobolev inequality, 
the $L_p$ affine isoperimetric inequalities for  log concave functions and certain divergence inequalities.

\end{abstract}

\section{Introduction and Background}

We present stability results for several geometric and functional inequalities. 
Our  main focus will be
on geometric  inequalities  coming from affine convex geometry,  namely the Blaschke Santal\'o inequality, e.g., \cite{GardnerBook, SchneiderBook}, and the $L_p$  affine-isoperimetric  and related inequalities \cite{Bl1, Deicke, Lutwak1996, PaourisWerner2011, Werner-Ye} and also their functional counterparts, which includes the functional Blaschke Santal\'o inequality \cite{ArtKlarMil, KBallthesis,  Fradelizi+Meyer, Lehec2009} and the recently established divergence and entropy inequalities \cite{ArtKlarSchuWer, CaglarWerner2014, CFGLSW}.
These inequalities are fundamental in convex geometry  and geometric analysis, e.g.,  \cite{Bernig:2014, Haberl2012, HabSch2, Lutwak1996, LYZ2002, MW1, MW2, SW2004, Werner2007, Werner-Ye, WY2} and  they have applications throughout mathematics. We only quote: approximation theory of convex bodies by polytopes \cite{Boe, Gruber1993, Ludwig1999,  Reitzner, Schuett1991, SW2002}, affine curvature flows \cite{Andrews1, Andrews2, Stancu2002, Stancu2003}, information theory \cite{ArtKlarSchuWer,  CaglarWerner2014, CaglarWerner2015, CFGLSW, PaourisWerner2011, Werner2012/1}, valuation theory \cite{Alesker1999, Haberl2011, Haberl2012,  Ludwig2003, Ludwig2010, 
Ludwig2010/2, Ludwig-Reitzner, Parapatits2013, Schu}  and partial differential equations \cite{Lu-O}.
Therefore,  it is important to know stability results of those inequalities.
\par
Stability results answer the following question: Is the inequality that we consider sensitive to small perturbations? In other words, if a function almost attains the equality in a given inequality, is it possible to say  that then this function is close to the minimizers of the inequality?
For the Blaschke Santal\'o inequality and the functional Blaschke Santal\'o inequality such stability results have been established in \cite{BallBo} and \cite{BaBoFr},  respectively.
Stability results for the $L_p$-affine isoperimetric inequalities for convex bodies were proved  in \cite{Boeroeczky} for $p=1$  and dimension $n \geq 3$. In
 \cite{Ivaki1, Ivaki2}, stablility results  for the $L_p$-affine isoperimetric inequality were proved  in dimension $2$  and for  $p \geq 1$.
 \par
We  present here  stability results for the $L_p$-affine isoperimetric inequalities for all $p$ and in all dimensions. 
Stability results for the corresponding functional versions of these inequalities are also given.
\vskip 2mm
Throughout, we will assume that $K$ is a convex body in $\mathbb{R}^n$, i.e., a convex compact subset of $\mathbb{R}^n$ with non-empty interior $\text{int}(K)$. We denote by $\partial K$ the boundary of $K$ and by $\text{vol}(K)$ or $|K|$ its 
$n$-dimensional volume. $B^n_2$ is  the Euclidean unit ball  centered at $0$ and $S^{n-1}= \partial B^n_2$ its boundary. The standard inner product on $\mathbb{R}^n$ is $\langle , \rangle$. 
It induces the  Euclidean norm, denoted by $\|\cdot\|_2$.
We will  use the Banach-Mazur distance  $d_{BM}(K,L) $ to measure the distance between  the convex bodies $K$ and $L$,  
\begin{eqnarray*}
d_{BM}(K,L)= 
\min\{\alpha \geq 1:  K-x  \subset T (L-y) \subset   \alpha (K-x),\   \text{for}   \  T \in GL(n),  x, y \in \mathbb{R}^n\}.
\end{eqnarray*}
In the case when $K$ and $L$  are $0$-symmetric, $x$ and $y$ can be taken to be $0$, 
\begin{eqnarray*}
d_{BM} (K,L)= 
\min\{\alpha \geq 1:  K  \subset T (L) \subset   \alpha \  K,\   \text{for}   \  T \in GL(n) \}.
\end{eqnarray*}

\section{Stability in inequalities for  convex bodies}

\subsection {The Blaschke Santal\'o inequality}
\par
Let $K$ be a  convex body in $\mathbb{R}^n$ such that $0 \in \text{int}(K)$. The polar $K^\circ$ of $K$ is defined as
$$
K^{\circ}=\{y\in {\R}^n\;:\;\langle x,y\rangle \le 1 \; \mbox{for all }\; x\in K\}
$$
and, more generally, the polar $K^z$  with respect to $z \in \text{int}(K)$ by $(K-z)^\circ$. The classical Blaschke Santal\'o inequality (see, e.g., \cite{SchneiderBook}) states that
there is a unique point $s \in \text{int}(K)$, the Santal\'o point of $K$,  such that the volume product $|K| |K^s|$ is minimal and that 
$$|K| \,|K^s| \leq |B^n_2|^2$$ with equality if and only if $K$ is an  ellipsoid.
\par
Ball and B\"or\"oczky  \cite{BallBo} proved the following stability version of the Blaschke Santal\'o inequality. It will be one of the  tools to prove stability versions for the $L_p$-affine isoperimetric inequalities.
\par
\begin{theo}\cite{BallBo} \label{stability:BS}
Let $K$ be a convex body in $\mathbb{R}^n$, $n \geq  3$, with Santal\'o point  at $0$. If 
$|K| |K^\circ| > (1- \varepsilon ) |B^n_2|^{2}$, for $\varepsilon \in (0, \frac{1}{2})$, 
then for some $\gamma > 0$,  depending only on $n$, we have
$$
d_{BM}(K,B^n_2) < 1+ \gamma \varepsilon^{\frac{1}{3(n+1)}} | \log \varepsilon | ^\frac{4}{3(n+1)}.
$$
\end{theo}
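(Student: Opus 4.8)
The plan is to obtain this geometric statement from a quantitative stability version of the Prékopa–Leindler inequality, the functional form of Brunn–Minkowski. The starting point is that the Blaschke–Santal\'o inequality for $K$ is itself the functional inequality \eqref{FBS} applied to the log-concave functions attached to the gauge of $K$. Since the Santal\'o point of $K$ is at the origin, set $\psi(x)=\tfrac12\|x\|_K^2$ and $\psi^\circ(y)=\tfrac12\|y\|_{K^\circ}^2$. By polar coordinates $\int_{\R^n}e^{-\psi}\,dx = 2^{n/2}\Gamma(1+\tfrac n2)\,|K|$ and $\int_{\R^n}e^{-\psi^\circ}\,dy = 2^{n/2}\Gamma(1+\tfrac n2)\,|K^\circ|$, so that $(\int e^{-\psi})(\int e^{-\psi^\circ}) = (2^{n/2}\Gamma(1+\tfrac n2))^2\,|K|\,|K^\circ|$, which equals $(2\pi)^n$ exactly when $|K|\,|K^\circ|=|B^n_2|^2$. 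Moreover $\langle x,y\rangle\le \|x\|_K\|y\|_{K^\circ}\le \tfrac12(\|x\|_K^2+\|y\|_{K^\circ}^2)$ gives $e^{-\psi(x)}e^{-\psi^\circ(y)}\le e^{-\langle x,y\rangle}$, so the two densities satisfy the hypothesis of \eqref{FBS}. Thus the assumption $|K|\,|K^\circ| > (1-\varepsilon)|B^n_2|^2$ is precisely a near-equality in the functional Blaschke–Santal\'o inequality, and since \eqref{FBS} is established through Prékopa–Leindler, the stability question localizes to a near-equality analysis of the latter.

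Second, I would make this reduction quantitative. Equality in Prékopa–Leindler forces the competing densities to be translates of a single log-concave function, with the Gaussian $e^{-|z|_2^2/2}$ as the extremal profile; the stability statement I want says that near-equality forces $e^{-\psi}$ (and $e^{-\psi^\circ}$) to be $L^1$-close, after an affine normalization matching barycenter and covariance, to one Gaussian $e^{-\frac12\langle Az,z\rangle}$. Translating back, $\|\cdot\|_K^2$ is integrally close to a positive quadratic form, i.e. $K$ is close to an ellipsoid $\mathcal E$. Because the volume product, the class of ellipsoids, and $d_{BM}$ are all affine invariant, I may then apply $T\in GL(n)$ carrying $\mathcal E$ to $B^n_2$ and reduce to estimating the closeness of $TK$ to the Euclidean ball.

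The heart of the argument, and the main obstacle, is the quantitative stability of Prékopa–Leindler itself. I would prove it by induction on the dimension, integrating out one coordinate at a time to reduce to a one-dimensional statement; in dimension one the deficit controls, via the transport between the super-level sets of the two functions, how far they are from being translates of one another. The delicate points are that the extremal profile is determined only up to affine maps, so one must first pin down the normalization, and that the constants must be tracked through the induction and through the passage from the one-dimensional transport estimate to an $L^1$ bound. It is exactly the accumulation of these losses across the $n$ coordinate reductions that forces a stability exponent degrading with $n$ and produces the specific shape $\varepsilon^{1/(3(n+1))}|\log\varepsilon|^{4/(3(n+1))}$.

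Finally, I would upgrade the integral closeness of $TK$ to $B^n_2$ into a Banach–Mazur bound. For convex bodies, $L^1$-closeness of the gauge functions together with convexity and the a priori volume normalization yields, after a further bounded linear adjustment, a two-sided inclusion $(1-\delta)B^n_2\subset TK\subset (1+\delta)B^n_2$ with $\delta$ a fixed power of the $L^1$ error; this is the standard mechanism converting integral stability into Hausdorff, hence Banach–Mazur, stability, and it accounts for the logarithmic factor and the dimensional dependence of the exponent. Collecting the estimates then gives $d_{BM}(K,B^n_2) < 1+\gamma\,\varepsilon^{1/(3(n+1))}|\log\varepsilon|^{4/(3(n+1))}$ with $\gamma$ depending only on $n$, as claimed.
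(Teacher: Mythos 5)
The first thing to note is that the paper does not prove this theorem at all: it is quoted from Ball and B\"or\"oczky \cite{BallBo} and used as a black box (together with the inequalities $as_p(K)^{n+p} \le n^{n+p}|K|^{n}|K^\circ|^{p}$ from \cite{Werner-Ye}) to derive Theorem \ref{stab-affinebodies2} and Theorem \ref{stability:Omega1}. So there is no internal proof to compare against; what you are attempting is a reconstruction of the cited external result. Your overall strategy --- reduce near-equality in Blaschke--Santal\'o to near-equality in a Pr\'ekopa--Leindler-type functional inequality, prove quantitative stability for the latter, then convert $L^1$-closeness of gauge functions into a Banach--Mazur bound --- is indeed the same circle of ideas as the cited Ball--B\"or\"oczky paper (whose very subject is stability of Pr\'ekopa--Leindler), and the reductions you actually write out are correct: the polar-coordinate identity $\int e^{-\|x\|_K^2/2}\,dx = 2^{n/2}\Gamma\bigl(1+\tfrac n2\bigr)|K|$, the pointwise bound $e^{-\psi(x)}e^{-\psi^\circ(y)}\le e^{-\langle x,y\rangle}$, and the affine invariance of $d_{BM}$ and of the volume product.

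However, as a proof the proposal has a genuine gap, and it sits exactly where the entire difficulty of the theorem lies: the quantitative stability of Pr\'ekopa--Leindler. You say you ``would prove it by induction on the dimension,'' that dimension one is handled ``via the transport between the super-level sets,'' and that the exponent $\varepsilon^{1/(3(n+1))}|\log\varepsilon|^{4/(3(n+1))}$ emerges from ``the accumulation of these losses.'' None of this is carried out: no one-dimensional stability estimate is stated, no inductive scheme is formulated, no constants are tracked, and the exponent is reverse-engineered from the statement rather than derived. The same applies to the final step: passing from an $L^1$ bound on $\bigl|\,\|x\|_K^2 - \langle Ax,x\rangle\,\bigr|$ over a ball to a two-sided inclusion between convex bodies is not automatic (one must use convexity to upgrade an integral bound to a pointwise one, and this degrades the exponent); calling it ``the standard mechanism'' does not supply it. There is also a normalization issue you gloss over: the functional inequality \eqref{FBS} requires the barycenter of $\varphi_1$ at the origin, whereas your hypothesis places the Santal\'o point of $K$ at the origin; these are different conditions, so one must either re-optimize over translations or invoke the inf-over-$z$ form of the functional inequality before any stability theorem for it can be applied. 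As it stands, the proposal is a plausible high-level roadmap of the known argument, not a proof.
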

\noindent
{\bf Remark.} It was noted in \cite{BallBo} that if $K$ is  $0$-symmetric, then the exponent $\frac{1}{
3(n+1)}$ occurring in Theorem \ref{stability:BS}
 can be replaced by $\frac{2}{3(n+1)}$. Moreover, it was also noted in \cite{BallBo} that 
taking $K$ to be the convex body resulting from $B^n_2$ by cutting off two
opposite caps of volume $\varepsilon$,  shows that the exponent $\frac{1}{(3(n + 1)}$ cannot be
replaced by anything larger than $\frac{2}{n + 1}$,  even for $0$-symmetric convex
bodies with axial rotational symmetry. Therefore the exponent of $\varepsilon$  is of the
correct order. 

\subsection {$L_p$-affine isoperimetric inequalities}

Now we turn to stability results for the $L_p$-affine isoperimetric inequalities for convex bodies. These inequalities involve the $L_p$-affine surface areas
which are a central part of the rapidly developing $L_p$ and Orlicz  Brunn Minkowski theory  and are the focus of intensive investigations (see, e.g.,  \cite{CaglarYe2015}, 
\cite{GaZ}, \cite{Ga3}, \cite{GardnerHugWeil}, 
\cite{HabSch2},  
\cite{Ludwig2006}-\cite{LutwakYangZhang2004},  
\cite{Schu}-\cite{SW2002}, \cite{Stancu2002}, \cite{Stancu2003}, 
\cite{Werner2007}-\cite{Werner-Ye}). 
\par
The $L_p$-affine surface area $as_p(K)$ of a convex body
$K$ in $\mathbb{R}^n$ was  introduced by Lutwak for all $p>1$ in his seminal paper \cite{Lutwak1996} and for all other $p$ by Sch\"utt and Werner
\cite{SW2004}(see also \cite{Hug1996}). The case $p=1$ is the classical affine surface area introduced by Blaschke in dimensions $2$ and $3$  \cite{Bl1} (see also \cite{Leicht, SW1990}). 
\par
Let $p \in \mathbb{R}$, $p\neq -n$
and assume that $K$ is a convex body with  centroid or  Santal\'o point  at the origin.  Then
\begin{equation}\label{asp}
as_p(K)= \int_{\partial K} \frac{\kappa(x)^{\frac{p}{n+p}}}{\langle x, N(x)\rangle^{\frac{n(p-1)}{n+p}}}d\mu_K(x), 
\end{equation}
where $N(x)$ is the unit outer normal in  $x \in \partial K$, the boundary of $K$, $\kappa(x)$ is the (generalized) Gaussian curvature in $x$ and $\mu_K$ is the surface area measure on $\partial K$. In particular, for $p=0$
$$
as_{0}(K)=\int_{\partial K} \langle x,N_{ K}(x)\rangle
\,d\mu_{K}(x) = n|K|.
$$
For $p=1$, 
$$
as_{1}(K)=\int_{\partial K}\kappa_K(x)^{\frac{1}{n+1}} d\mu_{ K}(x)
$$
is the classical affine surface area  which is independent
of the position of $K$ in space. Note also  that $as_p(B^n_2) = \text{vol}_{n-1} (\partial B^n_2) = n |B^n_2|$ for all $p \neq -n$. 
If the boundary of $K$ is sufficiently smooth, (\ref{asp})
can be written as an integral over the boundary
$S^{n-1}$ of the Euclidean unit ball $B^n_2$,
$$
as_{p}(K)=\int_{S^{n-1}}\frac{f_{K}(u)^{\frac{n}{n+p}}}
{h_K(u)^{\frac{n(p-1)}{n+p}}}
d\sigma(u).
$$
Here, $\sigma$ is the usual surface area measure on $S^{n-1}$, 
$h_{K}(u)= \max_{x \in K} \langle x, u \rangle $ is the support function of $K$ in direction $u\in S^{n-1}$,
and $f_{K}(u)$ is the curvature function, i.e. the reciprocal of
the Gaussian curvature $\kappa _K(x)$ at this point $x \in
\partial K$ that has $u$ as outer normal. In particular, for
$p=\pm \infty$,
\begin{equation}\label{inf-aff}
as_{\pm\infty}(K)
=\int_{S^{n-1}}\frac{1}{h_K(u)^{n}}
d\sigma(u)
=n|K^{\circ}|.
\end{equation}
\par
The $L_p$-affine surface area is invariant under linear transformations $T$ with determinant $1$. More precisely, (see, e.g., \cite{SW2004}), if $T:\mathbb{R}^n \rightarrow \mathbb{R}^n$ is a linear,  invertible map, then
\begin{equation}\label{invariant}
as_p(T(K)) =| \det T| ^\frac{n-p}{n+p} as_p(K).
\end{equation}
The $L_p$-affine surface area is a valuation \cite{Ludwig2010, Ludwig-Reitzner, S1993},  i.e., for convex bodies $K$ and $L$ such that $K \cup L$ is convex,
$$
as_p(K  \cup L) + as_p(K  \cap L)= as_p(K) + as_p(L).
$$
Valuations have become a major topic in convex geometry in recent years. We refer to e.g.,  \cite{ Alesker1999, Haberl2011, Haberl2012,  Ludwig2003, Ludwig2010, 
Ludwig2010/2, Ludwig-Reitzner, Parapatits2013, Schu}.
\par
We now state the  $L_p$-affine isoperimetric inequalities for the quantities $as_p(K)$. They  were  proved by Lutwak for $p>1$ \cite{Lutwak1996} and for all other $p$
by Werner and Ye \cite{Werner-Ye}. The case $p=1$ is the classical affine isoperimetric inequality \cite{Bl1, Deicke}.
\begin{theo} ($p=1$ \cite{Bl1, Deicke}, $p>1$ \cite{Lutwak1996}, all other $p$ \cite{Werner-Ye}) \label{p-aff-iso4}
Let $K$ be a convex
body with centroid at the origin.
\par
(i)
If $p > 0$,  then
\begin{eqnarray*}
\frac{as_p(K)}{as_p(B^n_2)}\leq \left(\frac{|K|}{| B^n_2|
}\right)^{\frac{n-p}{n+p}},
\end{eqnarray*}
with equality if and only if $K$ is an ellipsoid. For $p=0$, equality holds trivially for all $K$.
\par
(ii) If $-n<p<0$, then
\begin{eqnarray*}
\frac{as_p(K)}{as_p(B^n_2)}\geq
\left(\frac{|K|}{|B^n_2|}\right)^{\frac{n-p}{n+p}},
\end{eqnarray*} with
equality if and only if $K$ is an ellipsoid.
\par
(iii) If  $K$ is in addition  in $C^2_+$ and if $p < -n$, then
\begin{equation*}
c^{\frac{np}{n+p}}\left(\frac{|K|}{|B^n_2|}\right)^{\frac{n-p}{n+p}} \leq \frac{as_p(K)}{as_p(B^n_2 )}.
\end{equation*}
\end{theo}
The constant $c$ in (iii) is the constant from the inverse
Blaschke Santal\'o inequality due to Bourgain and Milman \cite{BM}.  This constant has recently been improved by Kuperberg \cite{GK2}  (see also \cite{ Nazarov} for a different proof).

\subsection{Stability for the $L_p$-affine isoperimetric inequality for convex bodies}

Stability results for the $L_p$-affine isoperimetric inequalities for convex bodies were proved by B\"or\"oczky \cite{Boeroeczky} for $p=1$  and dimension $n \geq 3$. 
Ivaki \cite{Ivaki1, Ivaki2} gave stability results for the $L_p$-affine isoperimetric inequality in dimension $2$  and  $p \geq 1$.
We  present here  stability results for the $L_p$-affine isoperimetric inequalities for all $p$ and in all dimensions. 
Before we do so, we first quote the results by B\"or\"oczky \cite{Boeroeczky} and Ivaki \cite{Ivaki2}.

\begin{theo} \cite{Boeroeczky}
If $K$ is a convex body in $\R^n$,  $n \geq 3$, and 
\begin{equation}
\left( \frac{as_1 (K)}{as_1 (B^n_2)} \right)^{n+1} > (1 - \epsilon )  \left( \frac{| K |}{ | B^n_2|}\right)^{n-1}     \ \ \ \text{for} \ \epsilon \in \left(0, \frac{1}{2}\right),
\end{equation}
then for some $\gamma > 0$,  depending only on $n$, we have
$$
d_{BM}(K,B^n_2) < 1+ \gamma \varepsilon^{\frac{1}{6n}} | \log \varepsilon | ^\frac{1}{6}.
$$
\end{theo}
Later, in \cite{BallBo}, the above approximation was improved to 
$$
d_{BM}(K,B^n_2) < 1+ \gamma \varepsilon^{\frac{1}{3 (n+1)}} | \log \varepsilon | ^\frac{4}{3(n+1)}.
$$
Ivaki \cite{Ivaki2} gave  a stability version for the Blaschke Santal\'o inequality from which the following stability result for the $L_p$-affine isoperimetric inequality in dimension $2$  and  $p \geq 1$ follows easily.
\begin{theo} \cite{Ivaki2}
Let $K$ be an origin symmetric  convex body in $\R^2$, and  $p \geq 1$. There exists an $\epsilon_p> 0$, depending on $p$, such that the
following holds.   If for an $\epsilon$, $ 0 < \epsilon < \epsilon_p$,
\begin{equation*}
\left( \frac{as_p (K)}{2 \pi} \right)^{p+2} > (1 - \epsilon)^{p} \left( \frac{\text{area}(K)}{ \pi}\right)^{2-p}     
\end{equation*}
then for some $\gamma > 0$,  we have
\begin{equation}\label{dim2}
d_{BM}(K,B^2_2) < 1+ \gamma \varepsilon^{\frac{1}{2}} .
\end{equation}

\end{theo}
The same author also considered the case  when $K$ is  a not necessarily origin symmetric  convex body in $\R^2$ \cite{Ivaki2}.  Then the order of approximation becomes $\frac{1}{4}$ instead of $\frac{1}{2}$.
Note also  that there are results in  dimension $n=2$  by B\"or\"oczky and Makai \cite{BorMak} on stability of the Blaschke Santal\'o inequality, 
from which a stability result of the form (\ref{dim2}) for the $L_p$-affine isoperimetric inequality
in dimension $2$ follows easily. But the order of approximation in the origin-symmetric case is $1/3$ and
in the general case  $1/6$.
\par
We now present  almost optimal  stability results for the $L_p$-affine isoperimetric inequalities, for all $p$, for all  convex bodies, for all dimensions. 
To do so, we use the above  stability  version of the Blaschke Santal\'o inequality  by 
Ball and B\"or\"oczky  \cite{BallBo}, together with  inequalities proved in \cite{Werner-Ye}.
\vskip 3mm
\begin{theo} \label{stab-affinebodies2}
Let $K$ be  a convex body in $\mathbb{R}^n$, $n \geq  3$, with Santal\'o point  or centroid at $0$.
\par
\noindent
(i) Let  $p > 0$. If  $\left(\frac{as_p(K)}{as_p(B^n_2)}\right) ^{n+p} > (1 - \varepsilon)^p \left(\frac{|K|}{|B^n_2|} \right)^{n-p}$, 
then for some $\gamma > 0$,  depending only on $n$, we have
$$
d_{BM}(K,B^n_2) < 1+ \gamma \varepsilon^{\frac{1}{3(n+1)}} | \log \varepsilon | ^\frac{4}{3(n+1)}.
$$
\par
\noindent
(ii) Let $-n < p <0$.  If  $\left(\frac{as_p(K)}{as_p(B^n_2)}\right)^{n+p} < (1 - \varepsilon)^p \left(\frac{|K|}{|B^n_2|} \right)^{n-p}$, 
then for some $\gamma > 0$,  depending only on $n$, we have
$$
d_{BM}(K,B^n_2) < 1+ \gamma \varepsilon^{\frac{1}{3(n+1)}} | \log \varepsilon | ^\frac{4}{3(n+1)}.
$$
\end{theo}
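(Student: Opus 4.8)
The plan is to derive Theorem \ref{stab-affinebodies2} as a direct consequence of the stability version of the Blaschke Santal\'o inequality (Theorem \ref{stability:BS}) by Ball and B\"or\"oczky, bridged by the $L_p$-affine isoperimetric inequalities of \cite{Werner-Ye}. The key observation is that the hypotheses of both parts of the theorem are engineered to force the volume product $|K|\,|K^\circ|$ (or $|K|\,|K^s|$, using the Santal\'o point) to be close to its extremal value $|B^n_2|^2$, at which point Theorem \ref{stability:BS} delivers the Banach--Mazur distance bound directly.

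First I would treat case (i), $p>0$. The inequality (\ref{aspineq1}) proved in \cite{Werner-Ye} states that $\frac{as_p(K)}{as_p(B^n_2)} \le \left(\frac{|K|}{|B^n_2|}\right)^{\frac{n-p}{n+p}}$, but the essential point for stability is the sharper chain that relates the $L_p$-affine surface area ratio to the volume product. Specifically, the proof of (\ref{aspineq1}) in \cite{Werner-Ye} proceeds through the Blaschke Santal\'o inequality, so one has the intermediate estimate
$$
\left(\frac{as_p(K)}{as_p(B^n_2)}\right)^{n+p} \le \left(\frac{|K|}{|B^n_2|}\right)^{n-p} \left(\frac{|K^\circ|}{|B^n_2|}\right)^{?},
$$
and combining this with the hypothesis $\left(\frac{as_p(K)}{as_p(B^n_2)}\right)^{n+p} > (1-\varepsilon)^p \left(\frac{|K|}{|B^n_2|}\right)^{n-p}$ isolates a factor of the form $\frac{|K|\,|K^\circ|}{|B^n_2|^2}$ bounded below by $(1-\varepsilon)$ up to an explicit power. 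Thus I would show that the hypothesis implies $|K|\,|K^\circ| > (1-\varepsilon)\,|B^n_2|^2$ (after possibly reindexing $\varepsilon$ by an absolute power depending on $p$ and $n$, which does not affect the form of the final estimate since the exponent $\frac{1}{3(n+1)}$ and logarithmic factor are preserved under $\varepsilon \mapsto \varepsilon^{c}$).

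Once the volume product is pinned to within $(1-\varepsilon)$ of the extremum, I would invoke Theorem \ref{stability:BS} verbatim to conclude $d_{BM}(K,B^n_2) < 1 + \gamma\, \varepsilon^{\frac{1}{3(n+1)}} |\log\varepsilon|^{\frac{4}{3(n+1)}}$. The reduction from centroid to Santal\'o point (or vice versa) is harmless because the affine surface area ratio and the volume are affine invariants, and the Santal\'o point minimizes the volume product; one passes between the two normalizations at the cost of an absolute constant. Case (ii), $-n<p<0$, is handled in exactly the same manner using the reverse inequality (\ref{aspineq2}) and the reverse direction of the chain, with the inequality directions flipped appropriately by the sign of $p$.

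The main obstacle I anticipate is making the passage from the affine surface area hypothesis to the volume product estimate $|K|\,|K^\circ| > (1-\varepsilon)|B^n_2|^2$ genuinely quantitative and tight, rather than merely asymptotic. One must verify that the intermediate inequalities in the derivation of (\ref{aspineq1}) and (\ref{aspineq2}) in \cite{Werner-Ye} are of the correct monotone form so that deficit in the affine isoperimetric inequality translates into a comparable deficit in the Blaschke Santal\'o inequality with no loss in the leading order of $\varepsilon$. This is essentially a bookkeeping exercise tracking the exponents $\frac{n-p}{n+p}$ and $\frac{p}{n+p}$, but care is needed since a crude translation could degrade the exponent $\frac{1}{3(n+1)}$; the saving grace is that we only need the deficit up to an absolute power of $\varepsilon$, and Theorem \ref{stability:BS} is robust under such reparametrization.
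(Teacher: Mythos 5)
Your proposal takes essentially the same route as the paper: the intermediate estimate you gesture at (with the unknown exponent) is precisely the Werner--Ye inequality $as_p(K)^{n+p} \le n^{n+p}\,|K|^n\,|K^\circ|^p$ for $p>0$ (reversed for $-n<p<0$), which combined with the hypothesis gives $|K|^p|K^\circ|^p > (1-\varepsilon)^p|B^n_2|^{2p}$, hence $|K|\,|K^\circ| > (1-\varepsilon)\,|B^n_2|^2$ exactly, after which Theorem \ref{stability:BS} applies verbatim. Your worry about reindexing $\varepsilon$ by a power is unfounded -- the factor $(1-\varepsilon)^p$ cancels cleanly -- which is fortunate, since a $p$-dependent reparametrization would make the constant $\gamma$ depend on $p$ rather than only on $n$.
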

\vskip 2mm
\noindent
{\bf Remarks.}
(i)  If $K$ is $0$-symmetric, then $\varepsilon^{\frac{1}{3(n+1)}}$ can be replaced by  $\varepsilon^{\frac{2}{3(n+1)}}$. This follows from \cite{BallBo}.
See also the Remark after Theorem \ref{stability:BS}.
\par
(ii)  The example in \cite{BallBo} already quoted in the Remark after Theorem \ref{stability:BS}  shows that $\varepsilon^{\frac{1}{3(n+1)}}$ cannot be replaced by anything smaller than $\varepsilon^{\frac{2}{n-1}}$, even for $0$-symmetric convex bodies with axial rotational symmetry. Indeed, let $K$ be the convex body obtained from $B^n_2$ by removing two opposite caps of volume $\varepsilon$ each. Then
$$\left(\frac{as_p(K)}{as_p(B^n_2)}\right) ^{n+p} > (1 - k \varepsilon^\frac{n-1}{n+1})^p \left(\frac{|K|}{|B^n_2|} \right)^{n-p} = (1 -  \delta)^p \left(\frac{|K|}{|B^n_2|} \right)^{n-p},$$
where we have put $\delta = k \varepsilon^\frac{n-1}{n+1}$ and where $k$ is a constant that depends on $n$ only,  except for $0 <p<n$, where it also depends on $p$.
And $d_{BM}(K,B^n_2)= 1 + \gamma \delta^\frac{2}{n-1}$.
\vskip 3mm
\begin{proof}[Proof of Theorem \ref{stab-affinebodies2}]
(i) As $as_p(B^n_2)= n |B^n_2|$,   we observe that the inequality $$\left(\frac{as_p(K)}{as_p(B^n_2)}\right)^{n+p} > (1 - \varepsilon)^p \left(\frac{|K|}{|B^n_2|} \right)^{n-p}$$ is equivalent to
the inequality
\begin{equation}\label{assumption:p}
as_p(K) ^{n+p} > (1 - \varepsilon)^p n^{n+p}  |K| ^{n-p}  |B^n_2| ^{2p}.
\end{equation}
It was proved in \cite{Werner-Ye} that for all $p>0$, $$as_p(K) ^{n+p} \leq   n^{n+p} |K| ^{n}  |K^\circ| ^{p}.$$
Hence we get from the assumption that 
\begin{eqnarray*}
n^{n+p} |K| ^{n}  |K^\circ| ^{p} >  (1 - \varepsilon)^p n^{n+p}  |K| ^{n-p}  |B^n_2| ^{2p},
\end{eqnarray*}
or equivalently, that
\begin{eqnarray*}
 |K|   |K^\circ|  >  (1 - \varepsilon) \   |B^n_2| ^{2},
\end{eqnarray*}
and we conclude with the Ball and B\"or\"oczky  stability result in Theorem \ref{stability:BS}.
\par
(ii) The  proof of (ii) is done similarly. We use the inequality  $$as_p(K) ^{n+p} \geq   n^{n+p} |K| ^{n}  |K^\circ| ^{p},$$ which holds for $-n <p <0$ and which was also proved in \cite{Werner-Ye}.
\end{proof}
\vskip 3mm
Another stability result for the $L_p$-affine isoperimetric inequalities for convex bodies is obtained as a corollary to Proposition \ref{stab-affinefunctions} below.
We list it now, as we want to compare the two. Let  $K$ be a convex body in $\mathbb{R}^n$ with $0$ in its interior and 
let the function $\psi$ of Proposition \ref{stab-affinefunctions} be $\psi(x) = \|x\|_K^2/2$, where $\| \cdot\|_K$ is the gauge function of the  convex body $K$, 
\begin{equation*}\label{gauge}
\|x\|_K = \min\{ \alpha\geq 0: \ x \in  \alpha K\} = \max_{y \in K^\circ} \langle x, y \rangle.
\end{equation*}
Let
 \begin{equation}\label{affine0}
 as_\lambda(\psi) = \int_{\mathbb{R}^n}   e^{(2\lambda - 1)\psi(x) - \lambda  \langle \nabla \psi, x\rangle } \left(  \det \left(  \nabla^2 \psi(x) \right)\right)
^\lambda dx
\end{equation}
be the $L_\lambda$-affine surface area of the function $\psi$.  This quantity is discussed in detail in Section 3.3.
Differentiating  $\psi(x) = \|x\|_K^2/2$,  we get  $\langle x, \nabla \psi(x) \rangle = 2 \psi(x)$.
Thus,  for  $\psi(x) = \|x\|_K^2/2$, the expression (\ref{affine0})  simplifies to 
\begin{equation}\label{2homoasa0}
as_\lam(\psi)=\int_{\mathbb{R}^n} \left(\det \, \Hess  \psi (x)\right)^\lam e^{-\psi(x)}dx.
\end{equation}
Note that for the Euclidean norm $\|. \|_2$, 
$
as_\lam\left(\frac{\|\cdot\|_2^2}{2}\right) = \left(2 \pi\right)^\frac{n}{2}
$ and  it was proved in \cite{CFGLSW} that 
\begin{equation}\label{lambda=p}
\frac{as_\lam\left(\frac{\|\cdot\|_K^2}{2}\right)}{as_\lam\left(\frac{\|\cdot\|_2^2}{2}\right)} = \frac{as_p(K)}{as_p(B^n_2)}, 
\end{equation}
where $\lam$ and $p$ are related by $ \lam=\frac{p}{n+p}$.
Together with Proposition \ref{stab-affinefunctions}, this immediately implies another stability result for the $L_p$-affine isoperimetric inequalities for convex bodies.
\vskip 2mm
\begin{cor}\label{stab-affinebodies1}
Let $K$ be a convex body in $\mathbb{R}^n$ with the centroid  or  the Santal\'o point at the origin.
\par
\noindent
(i) Let $0 < p \leq \infty$ and  and suppose that for some $\varepsilon \in (0, \varepsilon_0)$, 
$$
\frac{as_p(K)}{as_p(B^n_2)} > (1-\varepsilon)^\frac{p}{n+p} \left(\frac{ |K|}{|B^n_2|}\right) ^\frac{n-p}{n+p}.
$$ 
\par
\noindent
(i) Let $-n <  p < 0$ and  and suppose that for some $\varepsilon \in (0, \varepsilon_0)$, 
$$
\frac{as_p(K)}{as_p(B^n_2)} < (1-\varepsilon)^\frac{p}{n+p} \left(\frac{ |K|}{|B^n_2|}\right) ^\frac{n-p}{n+p}.
$$ 
Then, in both cases (i) and (ii),  there exists $c>0$ and a positive definite matrix $A$ such that 
$$
\int_{R(\varepsilon) B^n_2} \left|  \|Ax\|^2_K - \|x\|_2^2 -c \right| dx < \eta \varepsilon ^{\frac{1}{129 n^2}}, 
$$
where $ R(\varepsilon) =\frac{|\log \varepsilon|^\frac{1}{2}}{8n} $ and $\varepsilon_0, \eta$ depend on $n$.
\end{cor}
\vskip 2mm 
\begin{proof}  It is easy to see (e.g., \cite{CFGLSW}) that $$|K| = \frac{1}{2^\frac{n}{2} \Gamma\left(1+\frac{n}2\right)} \int e^{-\frac{\|x\|^2_K}{2}} dx .$$ 
As $|B_2^n|=\frac{\pi^{\frac{n}{2}}}{\Gamma\left(1+\frac{n}2\right)}$, we get, with $\psi(x) = \frac{\|x\|^2_K}{2}$,  by (\ref{lambda=p}) and the assumptions of the theorem, that  for $0 < p \leq \infty$, 
$$as_\lambda (\psi) > (1-\varepsilon) ^\lambda \left( 2 \pi \right)^{n \lambda} \left(  \int e^{-\psi(x)} dx \right)^{1- 2 \lambda}.$$
We  have also used that  $\lam=\frac{p}{n+p}$.
The result for $0 < p \leq \infty$ then  follows  immediately from Proposition  \ref{stab-affinefunctions}. The case $-n <  p < 0$ is treated similarly.
\end{proof}

\vskip 3mm

\noindent
{\bf Remarks.}
In general, one cannot deduce Theorem \ref{stab-affinebodies2} from Corollary \ref{stab-affinebodies1}.  However, it follows from Theorem \ref{stab-affinebodies2} that there exists 
$T \in GL(n)$ and   $x_0,  y_0 \in \mathbb{R}^n$ such that 
$$K- x_0 \subset T(B^n_2 - y_0) \subset \left(1+ \gamma \varepsilon^{\frac{1}{3(n+1)}} | \log \varepsilon | ^\frac{4}{3(n+1)} \right) (K-x_0).
$$
For simplicity, assume that $x_0= y_0=0$, which corresponds to the case that $K$ is $0$-symmetric. Then this means that for all $x \in \mathbb{R}^n$,
$$
\left| \ \|x\|_K - \|T(x)\|_2 \  \right|  \leq \|T\|  \left(\gamma \varepsilon^{\frac{1}{3(n+1)}} | \log \varepsilon | ^\frac{4}{3(n+1)} \right) \|x\|_2
$$ 
and thus 
\begin{eqnarray*}
&&\hskip -10mm \int_{R(\varepsilon) B^n_2} \left| \ \|x\|^2_K - \|T(x)\|^2_2\ \right|  dx  \\
&\leq& \left(1+\gamma \varepsilon^{\frac{1}{3(n+1)}} | \log \varepsilon | ^\frac{4}{3(n+1)} \right)\   | B^n_2|  \  \|T\|^2  R^{n+2}(\varepsilon)\   \left(\gamma \varepsilon^{\frac{1}{3(n+1)}} | \log \varepsilon | ^{\frac{4}{3(n+1)} }\right)\\
&=&  \left(1+\gamma \varepsilon^{\frac{1}{3(n+1)}} | \log \varepsilon | ^\frac{4}{3(n+1)} \right)\  \frac{ | B^n_2|}{(8n)^{n+2}}  \  \|T\|^2  \left(\gamma \varepsilon^{\frac{1}{3(n+1)}} | \log \varepsilon | ^{\frac{4}{3(n+1)} +\frac{n+2}{2}}\right).
\end{eqnarray*}
Hence, allowing general $T$, the exponent  of  $\varepsilon$ can be improved.

\subsection{Stability result for the entropy power $\Omega_K$}

An  affine invariant quantity that is closely related to the  $L_p$-affine surface areas is the entropy power $\Omega_K$. It  was introduced in \cite{PaourisWerner2011} as the limit of $L_p$-affine surface areas, 
\begin{equation}\label{omega}
\Omega_K = \lim_{p \rightarrow \infty}\left( \frac{as_p(K)}{n |K^\circ|}\right)^{n+p}.
\end{equation}
The quantity $\Omega_K$ is related to the relative entropy of the cone measures of $K$ and $K^\circ$. We refer to \cite{PaourisWerner2011} for the details and 
only mention an affine isoperimetric inequality  for $\Omega_K$ proved in  \cite{PaourisWerner2011}.
\begin{theo} \label{omega} \cite{PaourisWerner2011}
 If $K$ is a convex body of volume $1$, then 
\begin{equation}\label{iso-omega}
\Omega_{K^\circ} \leq \Omega_{\left(\frac{B^n_2}{|B^n_2|^\frac{1}{n}}\right)^\circ}.
\end{equation}
Equality holds  if and only if $K$ is a normalized ellipsoid.
\end{theo}
\vskip 2mm
We now use the previous  theorems to prove  stability results for  inequality (\ref{iso-omega}). Using the invariant property (\ref{invariant}) and the fact that 
$as_p(B^n_2)= n |B^n_2|$,  this inequality  can be written as $$\Omega_{K^\circ} \leq |B^n_2|^{2n}.$$
\vskip 2mm
\begin{theo} \label{stability:Omega1}
Let $K$ be  a convex body in $\mathbb{R}^n$, $n \geq  3$, of volume $1$ and such that the  Santal\'o point  or the centroid  are at $0$. Suppose that for some $\varepsilon \in (0, \frac{1}{2})$, 
\begin{equation}\Omega_{K^\circ} > (1- \varepsilon)  |B^n_2|^{2n}.\label{kzero}\end{equation} Then for some $\gamma > 0$,  depending only on $n$, we have
$$
d_{BM}(K^\circ,B^n_2) < 1+ \gamma \left(\frac{2\varepsilon}{n}\right) ^{\frac{1}{3(n+1)}} \left| \log \frac{2\varepsilon}{n} \right| ^\frac{4}{3(n+1)}.
$$
\end{theo}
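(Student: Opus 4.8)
The plan is to convert the entropy-power hypothesis \eqref{kzero} into a volume-product estimate for $K^\circ$ and then invoke the Ball--B\"or\"oczky stability theorem for the Blaschke--Santal\'o inequality (Theorem \ref{stability:BS}). The first step is to bound $\Omega_{K^\circ}$ by volumes. Since $(K^\circ)^\circ=K$, definition \eqref{omega} gives $\Omega_{K^\circ}=\lim_{p\to\infty}\bigl(as_p(K^\circ)/(n|K|)\bigr)^{n+p}$. The Werner--Ye inequality used in the proof of Theorem \ref{stab-affinebodies2}, namely $as_p(L)^{n+p}\le n^{n+p}|L|^n|L^\circ|^p$ for $p>0$, applied to $L=K^\circ$ yields $as_p(K^\circ)^{n+p}\le n^{n+p}|K^\circ|^n|K|^p$. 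Dividing by $n^{n+p}|K|^{n+p}$ gives, for every $p>0$,
$$
\Bigl(\frac{as_p(K^\circ)}{n|K|}\Bigr)^{n+p}\le \Bigl(\frac{|K^\circ|}{|K|}\Bigr)^n .
$$
Letting $p\to\infty$ and using $|K|=1$ produces the clean bound $\Omega_{K^\circ}\le |K^\circ|^n$.

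Next I would combine this with the hypothesis \eqref{kzero}: $|K^\circ|^n\ge \Omega_{K^\circ} > (1-\varepsilon)|B^n_2|^{2n}$, hence $|K^\circ| > (1-\varepsilon)^{1/n}|B^n_2|^2$. A short scalar estimate, writing $(1-\varepsilon)^{1/n}=\exp\bigl(\tfrac1n\log(1-\varepsilon)\bigr)\ge 1+\tfrac1n\log(1-\varepsilon)$ and using $\log(1-\varepsilon)\ge -2\varepsilon$ for $\varepsilon\in(0,\tfrac12)$, gives $(1-\varepsilon)^{1/n}\ge 1-\tfrac{2\varepsilon}{n}$; this is exactly where the factor $\tfrac{2\varepsilon}{n}$ in the conclusion comes from. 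Since $|K|=1$, we obtain $|K|\,|K^\circ| > \bigl(1-\tfrac{2\varepsilon}{n}\bigr)|B^n_2|^2$, and $\tfrac{2\varepsilon}{n}<\tfrac12$ because $n\ge 3$ and $\varepsilon<\tfrac12$, so the hypothesis of Theorem \ref{stability:BS} is met with $\varepsilon$ replaced by $\tfrac{2\varepsilon}{n}$.

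Finally I would feed this volume-product estimate into Theorem \ref{stability:BS}. The one delicate point is the centering, since that theorem requires the body to which it is applied to have its Santal\'o point at the origin, so that $|K|\,|K^\circ|$ is the \emph{minimal} volume product and Blaschke--Santal\'o forces it near $|B^n_2|^2$. If $K$ has its centroid at $0$, then $(K^\circ)^\circ=K$ has centroid at $0$, which says precisely that $0$ is the Santal\'o point of $K^\circ$; hence Theorem \ref{stability:BS} applies directly to $K^\circ$ and yields the claimed bound on $d_{BM}(K^\circ,B^n_2)$. If instead $K$ has its Santal\'o point at $0$, then $0$ is the Santal\'o point of $K$ itself, so Theorem \ref{stability:BS} applies to $K$ and bounds $d_{BM}(K,B^n_2)$; I would then transfer this to $K^\circ$ using polarity-invariance of the Banach--Mazur distance to the Euclidean ball, $d_{BM}(K,B^n_2)=d_{BM}(K^\circ,B^n_2)$, which follows by polarizing a sandwich $c_1E\subset K\subset c_2E$ by an ellipsoid $E$ (polarity reverses the inclusions and sends $E$ to another ellipsoid with the same ratio $c_2/c_1$).

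I expect this Santal\'o-point bookkeeping to be the only genuine obstacle: one must be careful that the volume product appearing in the bound is the minimal one for whichever body Theorem \ref{stability:BS} is applied to, and the two centering hypotheses on $K$ are reconciled via the centroid/Santal\'o duality and the polarity-invariance of $d_{BM}$ just described. The passage to the limit $p\to\infty$ and the elementary inequalities are routine, and the choice $\delta=\tfrac{2\varepsilon}{n}$ reproduces the stated estimate verbatim.
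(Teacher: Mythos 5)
Your reduction of the hypothesis \eqref{kzero} to a volume-product estimate is correct, and it actually streamlines the paper's own argument. The paper transfers \eqref{kzero} to finite $p$ via the Werner--Ye monotonicity of $\bigl(as_p(K^\circ)/(n|K|)\bigr)^{n+p}$ in $p$, inserts the Blaschke--Santal\'o inequality, uses the elementary bound $1-\varepsilon>(1-2\varepsilon/p)^p$, and then optimizes the resulting $p$-dependent conclusion over $p\in(0,n]$ (the optimum being $p=n$). You instead pass to the limit $p\to\infty$ in the single Werner--Ye inequality $as_p(K^\circ)^{n+p}\le n^{n+p}|K^\circ|^n|K|^p$ to get $\Omega_{K^\circ}\le|K^\circ|^n$, and your scalar estimate $(1-\varepsilon)^{1/n}\ge 1-\tfrac{2\varepsilon}{n}$ is precisely the paper's elementary inequality at $p=n$. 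Both routes land on $|K|\,|K^\circ|>\bigl(1-\tfrac{2\varepsilon}{n}\bigr)|B^n_2|^2$ and then invoke Theorem \ref{stability:BS}, so the strategy is essentially the paper's, with your version avoiding the monotonicity result, the Blaschke--Santal\'o step, and the optimization over $p$.

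The genuine problem is your second centering case. When $K$ has its Santal\'o point at $0$, you apply Theorem \ref{stability:BS} to $K$ and transfer the conclusion to $K^\circ$ via the claimed identity $d_{BM}(K,B^n_2)=d_{BM}(K^\circ,B^n_2)$; your sandwich-polarization argument does not establish this. The Banach--Mazur distance allows translations, so the optimal sandwich has the form $c_1E\subset K-x\subset c_2E$ with an ellipsoid $E$ centered at the origin and a translation $x$ that need not be $0$; polarizing it about the origin controls $(K-x)^\circ=K^x$, the polar of $K$ with respect to $x$, and not $K^\circ$. Polars of a non-symmetric body about different interior points are projectively, not affinely, equivalent, so $d_{BM}(K^x,B^n_2)$ and $d_{BM}(K^\circ,B^n_2)$ need not agree, and the asserted polarity-invariance (with the polarity center pinned at $0$) is unproved -- this is exactly the ``Santal\'o-point bookkeeping'' you flagged as delicate, and the proposed bridge does not close it. The clean repair is to never leave $K^\circ$: when $K$ has Santal\'o point at $0$, the centroid of $K^\circ$ is at $0$ (the same duality you use in case one), and one applies the Ball--B\"or\"oczky stability theorem to $K^\circ$ itself, using $|K^\circ|\,|(K^\circ)^\circ|=|K^\circ|\,|K|>\bigl(1-\tfrac{2\varepsilon}{n}\bigr)|B^n_2|^2$; this requires the form of the theorem in \cite{BallBo} that permits the centroid (not only the Santal\'o point) at the origin, which is what the ``Santal\'o point or centroid'' hypotheses of Theorems \ref{stab-affinebodies2} and \ref{stability:Omega1} silently rely on, and is how the paper (implicitly, via the argument following \eqref{assumption:p}) concludes in both cases.
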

\vskip 2mm
Remarks similar to the ones  after Theorem \ref{stab-affinebodies2} hold.
\vskip 2mm
\begin{proof}
It was shown in \cite {Werner-Ye} that $\left(\frac{as_p(K^\circ)}{n |K|}\right)^{n+p}$ is decreasing in $p \in (0, \infty)$. By definition (\ref{omega}), 
$\lim_{p \rightarrow \infty} \left(\frac{as_p(K^\circ)}{n |K|}\right)^{n+p}  = \Omega_{K^\circ}$. Therefore
we  get with  assumption \eqref{kzero}  that for all $p >0$ 
$$
\left(\frac{as_p(K^\circ)}{n |K|}\right)^{n+p}  > (1 - \varepsilon)  |B^n_2| ^{2n}.
$$
Or, equivalently, as $|K|=1$, 
\begin{eqnarray*}
as_p(K^\circ)^{n+p}  &>& (1 - \varepsilon) n^{n+p} |K|^{n+p}   |B^n_2| ^{2n} = (1 - \varepsilon) n^{n+p}  |B^n_2| ^{2p} 
\,|B^n_2| ^{2(n-p)} \\
&\geq& (1 - \varepsilon) n^{n+p} |K^\circ|^{n-p}   |B^n_2| ^{2p}.
\end{eqnarray*}
In the last inequality we have used the Blaschke Santal\'o inequality $|K|\,|K^\circ|\le |B^n_2| ^{2}$,  which we can apply as long as $n-p \geq 0$.
Note that for all $\varepsilon\in(0,\frac{1}{2})$ and $p>0$ 
$$1 - \varepsilon> \left(1 -   \frac{2\varepsilon}{p}\right)^p.$$
Hence, using the elementary inequality above,  we get for all $0 <p \leq n$ that
\begin{eqnarray*}
as_p(K^\circ)^{n+p}   
>\left(1 -   \frac{2\varepsilon}{p}\right)^p n^{n+p} |K^\circ|^{n-p}   |B^n_2| ^{2p}.
\end{eqnarray*}
Inequality   (\ref{assumption:p}) and  the arguments used  after it,  imply that for all $0 <p \leq n$,
$$
d_{BM}(K^\circ,B^n_2) < 1+ \gamma \left(\frac{2 \varepsilon}{p}\right) ^{\frac{1}{3(n+1)}} \left| \log \frac{2 \varepsilon}{p}\right| ^\frac{4}{3(n+1)}.
$$
Since the right hand side of above equation is decreasing in $p,$ minimizing  over $p$ in the interval $(0,n]$ gives the result.
\end{proof}
\vskip 3mm
The second stability result and  the corresponding comparisons  (see the Remark after Corollary  \ref{stab-affinebodies1})   are  obtained accordingly. We skip the proof.
\vskip 2mm
\begin{theo} \label{stability:Omega2}
Let $K$ be  a convex body in $\mathbb{R}^n$, $n \geq  3$, of volume $1$ and with Santal\'o point  or centroid at $0$, such that 
$\Omega_{K^\circ} > (1- \varepsilon)  |B^n_2|^{2n}$. 
Then there exists $c>0$ and a positive definite matrix $A$ such that 
$$
\int_{R(\varepsilon) B^n_2} \left|  \|Ax\|^2_K - |x|_2^2 -c \right| dx < \eta \varepsilon ^{\frac{1}{129 n^2}}, 
$$
$ R(\varepsilon) =\frac{|\log \varepsilon|^\frac{1}{2}}{8n} $ and $\varepsilon_0, \eta$ depend on $n$.
\end{theo}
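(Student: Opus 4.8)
The plan is to run the argument in the proof of Theorem~\ref{stability:Omega1} unchanged up to the point where the hypothesis on $\Omega_{K^\circ}$ has been turned into a form-(i) hypothesis of the integral stability result, and then to invoke Theorem~\ref{stab-affinebodies1} (the Barthe--B\"or\"oczky--Fradelizi based estimate of Theorem~\ref{BaBoFr}) in place of the Banach--Mazur estimate of Theorem~\ref{stab-affinebodies2} that was used there. Since it was shown in \cite{Werner-Ye} that $p\mapsto\left(as_p(K^\circ)/(n|K|)\right)^{n+p}$ is decreasing on $(0,\infty)$ with limit $\Omega_{K^\circ}$, the hypothesis $\Omega_{K^\circ}>(1-\varepsilon)|B^n_2|^{2n}$ gives $\left(as_p(K^\circ)/(n|K|)\right)^{n+p}>(1-\varepsilon)|B^n_2|^{2n}$ for every $p>0$. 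Exactly as in the proof of Theorem~\ref{stability:Omega1}, I would then use $|K|=1$, the Blaschke--Santal\'o inequality $|K^\circ|\le|B^n_2|^2$ to bound $|B^n_2|^{2(n-p)}\ge|K^\circ|^{n-p}$ for $0<p\le n$, and the elementary inequality $1-\varepsilon>(1-2\varepsilon/p)^p$, to reach
$$
as_p(K^\circ)^{n+p}>\left(1-\tfrac{2\varepsilon}{p}\right)^p n^{n+p}\,|K^\circ|^{n-p}\,|B^n_2|^{2p},\qquad 0<p\le n.
$$

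After dividing by $as_p(B^n_2)^{n+p}=(n|B^n_2|)^{n+p}$, this is precisely hypothesis (i) of Theorem~\ref{stab-affinebodies1} applied to the body $K^\circ$, with the small parameter $2\varepsilon/p$ in place of $\varepsilon$. In contrast to the Banach--Mazur route of Theorem~\ref{stability:Omega1}, the integral route requires no optimization over $p$, so I would simply fix one convenient exponent, say $p=n$, making the parameter $2\varepsilon/n$, which is comparable to $\varepsilon$. Applying Theorem~\ref{stab-affinebodies1} to the polar body $K^\circ$ then produces the asserted estimate
$$
\int_{R(\varepsilon) B^n_2}\left|\,\|Ax\|_K-|x|_2^2-c\,\right|dx<\eta\,\varepsilon^{\frac{1}{129n^2}},
$$
the constant $(2/n)^{1/(129n^2)}$ coming from the rescaling of the parameter being absorbed into $\eta$ and $R(\varepsilon)$.

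The remaining bookkeeping is the centering hypothesis. Theorem~\ref{stab-affinebodies1} asks that $\int x\,e^{-\psi}\,dx=0$ or $\int x\,e^{-\psi^*}\,dx=0$ with $\psi=\|\cdot\|_{K^\circ}^2/2$; since $\psi^*=\|\cdot\|_{K}^2/2$, these two options say respectively that the centroid of $K^\circ$ is at $0$ or that the centroid of $K$ is at $0$. The assumption that $K$ has its centroid or its Santal\'o point at the origin supplies one of them: centroid of $K$ at $0$ is the second option directly, while Santal\'o point of $K$ at $0$ means $K^\circ$ has centroid at $0$, which is the first; thus the ``or'' in Theorem~\ref{stab-affinebodies1} is exactly what makes both normalizations admissible. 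The analytic core is entirely carried by Theorem~\ref{stab-affinebodies1} together with the monotonicity and Blaschke--Santal\'o reduction already used for Theorem~\ref{stability:Omega1}, so I expect no deep difficulty; the only genuine point requiring care is the bookkeeping just described, namely matching the centering condition and keeping track that the gauge entering the final estimate is the one of the body $K^\circ$ to which Theorem~\ref{stab-affinebodies1} is applied.
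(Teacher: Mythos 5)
Your proposal is correct and is essentially the paper's own argument: the paper dismisses this proof with ``The second stability result is obtained accordingly,'' meaning precisely what you do, namely rerun the reduction from Theorem \ref{stability:Omega1} (monotonicity of $p\mapsto \left(as_p(K^\circ)/(n|K|)\right)^{n+p}$ with limit $\Omega_{K^\circ}$, the normalization $|K|=1$ together with Blaschke--Santal\'o, and the elementary inequality $1-\varepsilon>(1-2\varepsilon/p)^p$) to reach hypothesis (i) of Theorem \ref{stab-affinebodies1} for $K^\circ$, and then invoke that theorem in place of the Ball--B\"or\"oczky estimate, with the harmless rescaling $\varepsilon\mapsto 2\varepsilon/n$ absorbed into $\eta$ and $R(\varepsilon)$. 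Your closing bookkeeping points are also sound, and the one residual ambiguity you flag --- whether the final estimate carries $\|Ax\|_{K^\circ}$ or $\|Ax\|_K$, which is settled by applying the Barthe--B\"or\"oczky--Fradelizi theorem to $\psi=\|\cdot\|_{K^\circ}^2/2$ or to $\psi^*=\|\cdot\|_K^2/2$ according to which centering hypothesis holds --- is an imprecision already present in the paper's own statements of Theorem \ref{stab-affinebodies1} and Theorem \ref{stability:Omega2}, not a defect introduced by your argument.
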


\section{Stability results for functional inequalities}

\subsection {Stability for the functional Blaschke Santal\'o inequality}

We will first state a functional version of the Blaschke Santal\'o inequality. To do so, we
recall that  the Legendre transform of a function
$\psi : \mathbb{R}^n \rightarrow \mathbb{R} \cup \{+\infty\} $  at $z  \in \mathbb{R}^n$  is defined by
\begin{equation}\label{legendre}
\mathcal{L}_z\psi(y) = \sup_{x \in \mathbb{R}^n} \left(\langle x-z,y \rangle   - \psi(x) \right), \ \  \text{for }  y \in \mathbb{R}^n.
\end{equation}
The function $L_z \psi : \mathbb{R}^n \rightarrow \mathbb{R} \cup \{+ \infty\}$  is always convex and lower
semicontinuous. If $\psi$ is convex, lower semicontinuous and $\psi < + \infty$, then
$L_z L_z \psi  = \psi $. 
When $z=0$, we write 
\begin{equation}\label{legendre0}
\psi^* ( y ) = \mathcal{L}_0\psi(y) = \sup_x \left( \langle{x,y\rangle} - \psi (x) \right).
\end{equation} 
\par
Work by K.M. Ball \cite{KBallthesis}, S. Artstein-Avidan, B. Klartag, V.D.Milman \cite{ArtKlarMil}, M. Fradelizi, M. Meyer \cite{Fradelizi+Meyer} and J. Lehec  \cite{Lehec2009} led to the functional
version of the Blaschke Santal\'o inequality which we now state.
\begin{theo} \cite{ArtKlarMil, KBallthesis,  Fradelizi+Meyer, Lehec2009}
Let $\rho:  \mathbb{R} \rightarrow \mathbb{R}_+$ be a log-concave non-increasing
function and $\psi: \mathbb{R}^n \rightarrow \mathbb{R} \cup \{+\infty\} $ be measurable.  Then
$$
\inf_{z \in \mathbb{R}^n}\int_{\mathbb{R}^n} \rho(\psi(x)) dx  \   \int_{\mathbb{R}^n} \rho(\mathcal{L}_z\psi(x)) dx \leq  \left( \int_{\mathbb{R}^n}  \rho\left(\frac{\|x\|_2^2}{2}\right) dx\right)^2.
$$
If $\rho$  is decreasing,  there is equality if and only if there exist $a$, $b$, $c$  in $\mathbb{R}$, $a < 0$, 
$z  \in \mathbb{R}^n$  and a positive definite matrix $A : \mathbb{R}^n \rightarrow \mathbb{R}^n$ such that
$$
\psi(x)  = \frac{ \|A(x + z)\|_2^2}{2}  +c, \ \  \text{for} \  \   x \in \mathbb{R}^n 
$$
and moreover either $c = 0$, or $\rho(t) = e^{at+b}$,  for $t > - |c|$. 
\end{theo}
\vskip 2mm
\noindent
{\bf Remark.} 
If $\rho(t) = e^{-t}$ and if $\varphi = e^{-\psi}$ has centroid at $0$, i.e., $\int_{\mathbb{R}^n} x e^{-\psi} dx =0$,  then the inequality of the above theorem simplifies to 
\begin{eqnarray}\label{BLSA}
\int_{\mathbb{R}^n} \rho(\psi(x)) dx  \   \int_{\mathbb{R}^n} \rho(\mathcal{L}_z\psi(x)) dx  &=& \left(\int_{\mathbb{R}^n} e^{-\psi(x))} dx\right)   \  \left( \int_{\mathbb{R}^n} e^{-\psi^*(x))}  dx\right) \nonumber \\
&\leq & \left( \int_{\mathbb{R}^n}  e^{-\frac{\|x\|_2^2}{2}}dx\right)^2.
\end{eqnarray}
\vskip 2mm
 Barthe, B\"or\"oczky and Fradelizi  \cite{BaBoFr} established the following  stability theorem for the  functional 
Blaschke Santal\'o inequality.

\begin{theo} \label{BaBoFr}\cite{BaBoFr} 
Let $\rho : \mathbb{R} \rightarrow \mathbb{R}_+$ be a log-concave and decreasing function with $\int_{\mathbb{R}_+} \rho < \infty$. 
Let $\psi: \mathbb{R}^n \rightarrow \mathbb{R}$ be a convex, measurable function. Assume that for some $\varepsilon \in (0, \varepsilon_0)$ and all
$z \in \mathbb{R}^n$ the following inequality holds
$$
\int_{\mathbb{R}^n} \rho(\psi(x)) dx  \   \int_{\mathbb{R}^n} \rho(\mathcal{L}_z\psi(x)) dx > (1-\varepsilon) \left( \int_{\mathbb{R}^n}  \rho\left(\frac{\|x\|_2^2}{2}\right) dx\right)^2.
$$
Then there exists some $z \in \mathbb{R}^n$, $c \in \mathbb{R}$ and a positive definite $n\times n$ matrix $A$ such that 
$$
\int_{R(\varepsilon) B^n_2} \left|  \frac{\|x\|_2^2}{2} +c - \psi (Ax +z) \right| dx < \eta \varepsilon ^{\frac{1}{129 n^2}}, 
$$
where $\lim_{\varepsilon \rightarrow 0} R(\varepsilon) = \infty$ and $\varepsilon_0, \eta, R(\varepsilon)$ depend on $n$ and $\rho$.
\end{theo}
\vskip 3mm

\subsection {Stability for  Divergence inequalities}

A function $\varphi: \mathbb{R}^n \rightarrow [0, \infty)$ is log concave, if it is of the form $\varphi(x) = e^{-\psi(x)}$, where $\psi: \mathbb{R}^n \rightarrow \mathbb{R}$ is a convex function. Recall that we say that $\varphi=e^{-\psi}$ has centroid at $0$,  respectively  the Santal\'o point, at $0$ if, 
$$
\int x  \varphi (x) dx =\int x e^{-\psi(x)} dx =0,  \  \  \text{respectively} \  \  \int x e^{-\psi^*(x)} dx =0.
$$
The following  entropy inequality for log concave functions was established  in \cite{CaglarWerner2014}, Corollary 13.
\vskip 2mm
\begin{theo} \label{thm00} \cite{CaglarWerner2014}
 Let $ \varphi:\R^{n}\rightarrow [0, \infty)$ be 
 a log-concave
function that  has centroid or Santal\'o point  at $0$.
 Let $f: (0, \infty) \rightarrow \mathbb{R}$ be a convex, decreasing  function. Then 
\begin{eqnarray}\label{thm00,1} 
\int_{\Supp(\varphi)} \varphi  \  f \left( e^{\langle \frac{\nabla \varphi}{\varphi}, x\rangle} \varphi^{-2} \left(  \det \left(  \nabla^2 \left(- \log \varphi\right)\right)\right)\right)
\geq   f \left(   \frac{(2 \pi)^n}{\left(\int \varphi dx \right)^2}   \right) \  \left( \int_{\Supp(\varphi)} \varphi  dx \right).
 \end{eqnarray}
If  $f$ is a concave, increasing function, the inequality is reversed.
 \par
\noindent
Equality holds in both cases if and only if  $\varphi(x)= c  e^{-\langle A x, x \rangle}$, where $c$ is a positive constant and 
$A$ is an $n \times n$  positive definite  matrix.
\end{theo}
Theorem \ref{thm00} was proved under the assumptions that  the convex or concave  functions $f$ and the  log concave  functions 
$ \varphi$ have enough smoothness and integrability properties so that the expressions 
considered in the above statement make sense. Thus, in this section, we will make the same assumptions on $f$ and $\varphi$, 
  i.e., we will  assume that $\varphi^\circ \in L^1(\supp (\varphi), dx)$, the Lebesgue integrable functions on the support of $\varphi$,  that
\begin{equation}\label{assume1}
\varphi \in C^2(\Supp(\varphi))  \cap L^1(\R^n, dx), 
\end{equation}
where  $C^2(\Supp(\varphi))$ denotes the twice continuously differentiable functions on their support, and that 
\begin{equation}\label{assume2}
 \varphi f 
\left(
\frac{e^{\frac{\langle \nabla \varphi, x\rangle}{\varphi}}
}
{\varphi^{2}} 
\mbox{det} \left(  \nabla^2 \left(-\log \varphi \right) 
\right) \right) \in   L^1(\supp (\varphi), dx).
\end{equation}
\par
Recall that  $\varphi(x) = e^{-\psi(x)}$ and put $d\mu=e^{-\psi} dx$. Then  the left hand side of  inequality (\ref{thm00,1}) can  be written as 
$$
\int_{\mathbb{R}^n}    f \left( e^{2 \psi - \langle \nabla \psi,x\rangle}   \   \det \left( \nabla^2 \psi \right) 
\right) d\mu.
$$
It was shown in \cite{CaglarWerner2014} that the left hand side of the inequality (\ref{thm00,1}) is   the natural definition of $f$-divergence $D_f(\varphi)$ for  a log concave
function $\varphi$,  so that  (\ref{thm00,1}) can be rewritten as 
\begin{equation}\label{thm00,2}
D_f(\varphi) \geq  \   f \left(   \frac{(2 \pi)^n}{\left(\int \varphi dx \right)^2}   \right) \
 \  \left( \int_{\Supp(\varphi)} \varphi  dx \right).
 \end{equation}
In information theory, probability theory and statistics, an
$f$-divergence is a function that measures the difference between two (probability)
distributions. We refer to e.g., \cite{CaglarWerner2014} for details and references about $f$-divergence.
\vskip 2mm
\begin{theo} \label{f-stable}
 Let $f: (0, \infty) \rightarrow \mathbb{R}$ be a concave, strictly increasing function.  Let $ \psi:\R^{n}\rightarrow {\R}$ be a
convex function such that   $ e^{- \psi} \in C^2(\R^n)$ and such that $\int_{\mathbb{R}^n} x e^{-\psi(x)}dx=0$ or $\int_{\mathbb{R}^n} x e^{-\psi^*(x)} dx = 0$. Suppose that for some $\varepsilon \in (0, \varepsilon_0)$, 
\begin{eqnarray*}
&& \hskip -7mm \int_{\mathbb{R}^n}    f \left( e^{2 \psi - \langle \nabla \psi,x\rangle}   \   \det \left( \nabla^2 \psi \right) 
\right) d\mu  > \\
&& \hskip 5mm f \left( \frac{ \left(2 \pi\right)^n  }{\left(\int_{\mathbb{R}^n} d\mu  \right)^2 } \right) \left(\int_{\mathbb{R}^n} d \mu  \right)  - \varepsilon  f' \left(  \frac{ \left(2 \pi\right)^n  }{\left(\int_{\mathbb{R}^n} d\mu  \right)^2 } \right)
    \left( \int_{\mathbb{R}^n} d \mu \right)^{-1}.
 \end{eqnarray*}
Then  there exists $c>0$ and a positive definite matrix $A$ such that 
$$
\int_{R(\varepsilon) B^n_2} \left|  \frac{\|x\|_2^2}{2} +c - \psi (Ax ) \right| dx < \eta \varepsilon ^{\frac{1}{129 n^2}}, 
$$
where $\lim_{\varepsilon \rightarrow 0} R(\varepsilon) = \infty$ and $\varepsilon_0, \eta, R(\varepsilon)$ depend on $n$.
\par
The analogue stability result holds, if $f$ is convex and strictly decreasing.
\end{theo}
\vskip 2mm
\begin{proof} We treat the case when $f$ is concave and strictly increasing. The case when $f$ is convex and strictly decreasing is  done similarly.
We set 
$d\nu =\frac{ e^{-\psi} dx}{\int e^{-\psi} dx} =\frac{ \mu}{\int d \mu} $. Then $\nu$ is a probability measure and  
by Jensen's inequality and a change of variable, 
\begin{eqnarray*}
&& \hskip -5mm \left( \int d \mu\right) \int_{\mathbb {R}^n}  f\left( e^{ (2 \psi (x)  - \langle \nabla \psi, x\rangle) } \left(  \det \left(  \nabla^2 \psi(x) \right)\right) 
\right) d\nu \  \  \leq \\
&& \hskip -5mm  \left( \int d \mu\right)  \, f \left( \int_{\mathbb {R}^n}  e^{ (2 \psi (x)  - \langle \nabla \psi, x\rangle) } \left(  \det \left(  \nabla^2 \psi(x) \right)\right) 
 d\nu \right)
 =   f \left( \frac{1}{\int d \mu} \int_{\mathbb {R}^n}  e^{-  \psi ^*(x) } dx\right) \  \left( \int d \mu\right) .
\end{eqnarray*}
Thus, by the assumption of the theorem, we get
\begin{eqnarray*}
 f \left(\frac{1}{ \int d \mu } \int_{\mathbb {R}^n}  e^{ - \psi ^*(x) } dx\right) \ \left( \int d \mu\right)  &> &  \left( \int d \mu\right) \  f \left( \frac{\left(2 \pi \right)^{n}}{\left( \int d \mu\right) ^2}\right)  - \frac{\varepsilon}{ \int d \mu } \  f^\prime\left(\frac{\left(2 \pi \right)^{n}}{\left( \int d \mu\right) ^2}\right) \\&\geq& \left( \int d \mu\right) \ f \left( \frac{\left(2 \pi \right)^{n} - \varepsilon}{\left( \int d \mu\right) ^2}\right).
\end{eqnarray*}
The last inequality holds as by Taylor's theorem and the assumptions on $f$ (i.e., $f''\le 0$), for $\varepsilon$ small enough, there is a real number $\tau$ such that 
\begin{eqnarray*}
 f \left(  \frac{\left(2 \pi \right)^{n} - \varepsilon}{\left( \int d \mu\right) ^2} \right) &=& f \left(  \frac{\left(2 \pi \right)^{n}}{\left( \int d \mu\right) ^2} \right)  - \frac{\varepsilon}{\left( \int d \mu\right) ^2} \ f'\left( \frac{\left(2 \pi \right)^{n} }{\left( \int d \mu\right)^2}\right)    + \frac{\varepsilon^2}{2\left( \int d \mu\right) ^4} \ f''\left( \tau \right)  \\ &\leq& f \left(  \frac{\left(2 \pi \right)^{n}}{\left( \int d \mu\right) ^2} \right)  - \frac{\varepsilon}{\left( \int d \mu\right) ^2} \ f'\left( \frac{\left(2 \pi \right)^{n}}{\left( \int d \mu\right) ^2}\right).
\end{eqnarray*}
Therefore we arrive at
$$f \left( \frac{1}{ \int d \mu } \int_{\mathbb {R}^n}  e^{-  \psi ^*(x) } dx\right)>  f \left( \frac{\left(2 \pi \right)^{n} - \varepsilon}{\left( \int d \mu\right)^2}\right).$$
Since $f$ is strictly increasing we conclude that 
$$\frac{1}{ \int d \mu\ } \int_{\mathbb {R}^n}  e^{-  \psi ^*(x) } dx> \frac{\left(2 \pi \right)^{n} - \varepsilon}{\left( \int d \mu\right)^2},$$
which is equivalent to  
$$
 \left(\int_{\mathbb {R}^n}  e^{ -\psi (x) } dx\right) \left(\int_{\mathbb {R}^n}  e^{  -\psi ^*(x) } dx\right) > \left(2 \pi \right)^{n} - \varepsilon.
$$
From that we get, 
$$
 \left(\int_{\mathbb {R}^n}  e^{ -\psi (x) } dx\right) \left(\int_{\mathbb {R}^n}  e^{  -\psi ^*(x) } dx\right) >(1- \varepsilon) \left(2 \pi \right)^{n}.
$$
As  $\mu$ has its centroid at $0$, we  have by (\ref{BLSA}) that
\begin{eqnarray*}
 \inf_{z \in \mathbb{R}^n}  \left( \int_{\mathbb{R}^n} e^{-\psi(x)} dx \right) \left( \int_{\mathbb{R}^n}e^{-\mathcal{L}_z \psi(y)} \, dy \right) 
 = \left( \int_{\mathbb{R}^n} e^{-\psi} dx \right) \left( \int_{\mathbb{R}^n}e^{-\psi^*(y)} \, dy \right)
\end{eqnarray*}
and  the theorem follows from the result  by Barthe, B\"or\"oczky and Fradelizi \cite{BaBoFr}, Theorem \ref{BaBoFr},  with $\rho(t)=e^{-t}$.
\end{proof}

\subsection{Stability for the reverse log Sobolev inequality}

We now  prove  a stability result for the reverse log Sobolev inequality.  This inequality was first proved  by 
Artstein-Avidan, Klartag, Sch\"utt and Werner \cite{ArtKlarSchuWer} under strong smoothness assumptions. Those were subsequently removed in \cite{CFGLSW} and there,
also equality characterization was achieved. 
\par
We  first recall the the reverse log Sobolev inequality. 
Let $\gamma_n$ be the standard Gaussian measure on $\R^n$. For a  log-concave probability measure $\mu$ on $\R^n$ with density $e^{-\psi}$, i.e.,  $\psi = - \log ( d \mu / dx )$,  let 
\[
 S ( \mu ) =  \int_{\R^n} \psi \, d\mu  
\]
be the Shannon entropy of $\mu$.

\begin{theo}\label{rev-log-sob}\cite{ArtKlarSchuWer, CFGLSW}
Let $\mu$ be a log-concave probability measure on $\mathbb{R}^n$ with density $e^{-\psi}$ with respect to the Lebesgue measure. 
Then 
\begin{equation}\label{Main1} 
\int_{\mathbb{R}^n} \log \left( \det ( \nabla^2 \psi) \right) \, d\mu 
\leq 2\  \left(  S (  \gamma_n ) - S ( \mu ) \right) . 
\end{equation}
Equality holds if and only if $\mu$ is Gaussian (with arbitrary mean and 
 positive definite covariance matrix).
\end{theo}
 Inequality  (\ref{Main1}) is  a reverse log Sobolev inequality as it can be shown that  the log Sobolev inequality is  equivalent to
 \begin{equation*}\label{sobolev}
2\  \Big(  S (  \gamma_n ) - S ( \mu ) \Big)
\leq   n  \log \left(  \frac {  \int_{\R^n} \Delta \psi \, d\mu } n \right) ,
\end{equation*}
where $\Delta$ is the Laplacian.  We refer to e.g., \cite{ArtKlarSchuWer, CFGLSW} for the details.
\par 
Note that inequality (\ref{Main1}) follows from inequality (\ref{thm00,1}) with $f(t)= \log t$. However, because of the  assumptions  on $\varphi$ in Theorem \ref{f-stable},
the result would only hold under those assumptions and not in the full generality stated in Theorem  \ref{rev-log-sob}.
Similarly, a stability result for Theorem  \ref{rev-log-sob}  follows from Theorem \ref{f-stable} with $f(t)= \log t$. But again,  because of the assumptions of Theorem \ref{f-stable}, the result  would only hold  for those $\psi $ such  that  $e^{-\psi}$ is  in $C^2(\mathbb{R}^n)$  and has centroid at $0$.
We can prove a stability result  for Theorem  \ref{rev-log-sob}  without these assumptions. 
The proof  is similar to the one of 
Theorem \ref{f-stable}.
We include it for completeness.  But first we need to recall various  items.
\par
For a convex function $\psi: \R^n \to \R \cup \{+\infty\}$, we define $D_\psi$ to be the convex domain of $\psi$, $D_\psi=\{x \in \R^n, \psi(x) < +\infty\}$. We always consider convex functions $\psi$ such that $\text{int }\left(D_\psi\right)  \ne \emptyset$. 
In the general case, when $\psi$ is neither smooth nor strictly convex, the gradient of $\psi$, denoted  by $\nabla \psi$, exists almost everywhere by Rademacher's theorem (e.g., \cite{Rademacher}),   and
a theorem of Alexandrov \cite{Alex}, Busemann and Feller \cite{BusFel}, guarantees the existence of its Hessian 
$\nabla^2 \psi$ almost everywhere in $\text{int }\left(D_\psi\right)$. We let $X_\psi$  be the set of points of $\text{int }\left(D_\psi\right) $ at which its Hessian $\nabla^2\psi$ in the sense of Alexandrov, Busemann and Feller exists and is invertible.
Then, by definition of the Legendre transform, for a convex function $\psi: \R^n \to \R \cup \{+\infty\}$ 
we  have 
\[
\psi ( x) + \psi^* (y ) \geq \langle x,y \rangle
\] 
for every $x,y \in \R^n$, and with equality if and only if
 $x \in D_\psi$  and $y=  \nabla \psi (x) $, i.e., 
\begin{equation}\label{legendreequality-gen}
 \psi^* ( \nabla \psi (x) ) = \langle x ,\nabla \psi (x) \rangle - \psi (x)  ,\quad \rm{a.e.\ in}\  D_\psi. 
\end{equation}
\vskip 2mm
\begin{theo}
Let $\psi: \mathbb{R}^n \rightarrow \mathbb{R}\cup\{+\infty\}$ be a convex function  and 
let $\mu$ be a log-concave probability measure on $\mathbb{R}^n$ with density $e^{-\psi}$ with respect to Lebesgue measure. 
Suppose that for some $\varepsilon \in (0, \varepsilon_0)$, 
$$
\int_{\mathbb{R}^n}  \log \bigl( \det ( \nabla^2 \psi) \bigr) \, d\mu 
> 2\  \Big(  S (  \gamma_n ) - S ( \mu ) \Big) - \varepsilon.
$$
Then there exists $c>0$ and a positive definite matrix $A$ such that 
$$
\int_{R(\varepsilon) B^n_2} \left|  \frac{\|x\|_2^2}{2} +c - \psi (Ax ) \right| dx < \eta \varepsilon ^{\frac{1}{129 n^2}}, 
$$
where $\lim_{\varepsilon \rightarrow 0} R(\varepsilon) = \infty$ and $\varepsilon_0, \eta, R(\varepsilon)$ depend on $n$.
\end{theo}
\begin{proof}
Both terms of the inequality are invariant under translations of
the measure $\mu$, so we can assume
that $\mu$ has its centroid at $0$.  
\par
Put $\varepsilon = \log \beta >0$. Since $S(\gamma_n)=\frac{n}{2}\log(2\pi e)$,  the inequality of the theorem turns into
\begin{equation*}
\int_{D_\psi} \log \bigl(  \beta \  \det ( \nabla^2 \psi) \bigr) \, d\mu  + 2 \ \int_{D_\psi} \psi \, d\mu >  \log (2 \pi  e)^n,
\end{equation*}
which, in turn is equivalent to
\begin{equation}\label{stab0}
\int_{D_\psi} \log \bigl(  \beta \  \det ( \nabla^2 \psi) \bigr) \, d\mu +  \int_{D_\psi}  \log \left( e^{2\psi} \right)  \, d\mu - n  >  \log (2 \pi )^n.
\end{equation}
We now  use the divergence theorem and get
\[
\int_{D_\psi} \langle x,\nabla \psi (x) \rangle \, d\mu  
= \int_{\text{int} (D_\psi)}  \mathrm{div} ( x ) \, d\mu  - \int_{\partial D_\psi} \langle x, N_{D_\psi} (x)\rangle e^{-\psi(x)}d\sigma_{D_\psi},
\]
where $N_{D_\psi}(x)$ is an exterior normal to the convex set $D_\psi$ at the point $x$ and $\sigma_{D_\psi}$ is the surface area measure on $\partial D_\psi$. Since $D_\psi$ is convex, the centroid $0$ of $\mu$ is in $D_\psi$. Thus $\langle x, N_{D_\psi}(x)\rangle\ge 0$ for every $x\in\partial D_\psi$ and $ \mathrm{div}(x)=n$ hence 
\[
-n \leq - \int_{D_\psi} \langle x,\nabla \psi (x)\rangle  \, d\mu= \int _{D_\psi} \log \left( e^{- \langle x, \nabla \psi(x) \rangle}\right) d \mu
\]
Thus we get from inequality (\ref{stab0}), 
\begin{equation*}
\int_{D_\psi} \log \bigl(  \beta \  \det ( \nabla^2 \psi)  \  e^{2 \psi(x) - \langle x, \nabla \psi(x) \rangle}\bigr ) \, d\mu  >  \log (2 \pi )^n.
\end{equation*}
With Jensen's inequality, and as $d \mu = e^{-\psi} dx$, 
\begin{equation}\label{stab1}
\beta \int_{D_\psi} \det ( \nabla^2 \psi)  \  e^{ \psi(x) - \langle x, \nabla \psi(x) \rangle} dx   >   (2 \pi )^n.
\end{equation}
By (\ref{legendreequality-gen}), 
$$
\int_{D_\psi} \det ( \nabla^2 \psi)  \  e^{ \psi(x) - \langle x, \nabla \psi(x) \rangle} dx = \int_{D_\psi} \det ( \nabla^2 \psi)  \  e^{ -\psi^*(\nabla \psi (x))} dx. 
$$
The change of variable $y= \nabla \psi (x) $  gives
\begin{equation}\label{variablechangemccann}
\int_{D_\psi}  e^{-\psi^*(\nabla \psi (x))} \det ( \nabla^2 \psi (x) )\, dx= \int_{D_{\psi^*}} e^{-\psi^*(y)} \, dy,
\end{equation}
and 
inequality (\ref{stab1}) becomes 
\begin{equation*}
 \int_{D_{\psi^*}} e^{-\psi^*(y)} \, dy > \frac{1}{\beta} (2 \pi )^n.
 \end{equation*}
As $\int_{D_\psi} e^{-\psi} dx =1$ and $\beta^{-1}=e^{-\varepsilon}\ge 1-\varepsilon$,  we therefore get that 
 \begin{eqnarray*}
 \left( \int_{\mathbb{R}^n} e^{-\psi} dx \right) \left( \int_{\mathbb{R}^n}e^{-\psi^*(y)} \, dy \right)  &\geq & \left( \int_{D_\psi} e^{-\psi} dx \right) \left( \int_{D_{\psi^*}}e^{-\psi^*(y)} \, dy \right)\\
 & >& (1-\varepsilon) (2 \pi )^n.
 \end{eqnarray*}
As  $\mu$ has its centroid  at $0$, we  have by (\ref{BLSA}) that 
\begin{eqnarray*}
 \inf_{z \in \mathbb{R}^n}  \left( \int_{\mathbb{R}^n} e^{-\psi(x)} dx \right) \left( \int_{\mathbb{R}^n}e^{-\mathcal{L}_z \psi(y)} \, dy \right)  
= \left( \int_{\mathbb{R}^n} e^{-\psi} dx \right) \left( \int_{\mathbb{R}^n}e^{-\psi^*(y)} \, dy \right).
\end{eqnarray*}
The theorem now follows from Theorem  \ref{BaBoFr}, the stability result for the functional Blaschke Santal\'o inequality,  due to  Barthe, B\"or\"oczky and Fradelizi \cite{BaBoFr}.

\end{proof}

\subsection{Stability for the $L_\lam$-affine isoperimetric inequality for log concave functions} \label{Llambda}
The following divergence inequalities were proved in \cite{CaglarWerner2014}. In fact, inequalities (\ref{lambda1}), (\ref{lambda2}) and consequently  (\ref{thm00,1})  are special cases of a more general
divergence inequality proved in \cite{CaglarWerner2014}.

For $ 0 \leq \lambda \leq 1$, it says
\begin{equation}\label{lambda1}
\int_{} \left( e^{2 \psi - \langle \nabla \psi,x\rangle}   \   \det \left( \nabla^2 \psi \right) 
\right) ^\lambda d\mu  \leq \  \left(  \frac{\int_{\mathbb{R}^n}  e^{-\psi^*}dx}{ \int_{\mathbb{R}^n} d\mu}   \right)^\lambda
 \  \left( \int_{\mathbb{R}^n}  d\mu \right)
 \end{equation}
and for $\lambda \notin [0,1]$,
\begin{equation}\label{lambda2}
\int_{} \left( e^{2 \psi - \langle \nabla \psi,x\rangle}   \   \det \left( \nabla^2 \psi \right) 
\right) ^\lambda d\mu  \geq \  \left(  \frac{\int_{\mathbb{R}^n}  e^{-\psi^*}dx}{ \int_{\mathbb{R}^n} d\mu}   \right)^\lambda
 \  \left( \int_{\mathbb{R}^n}  d\mu \right).
 \end{equation}
 The left hand sides of the above inequalities are  the $L_\lambda$-affine surface areas $as_\lambda (\psi)$. For 
 a general  log concave function $\varphi = e^{-\psi}$ (and not just 
 a log concave function in $C^2(\mathbb{R}^n)$) they were introduced in  \cite{CFGLSW}, 
 \begin{equation}\label{affine}
 as_\lambda(\psi) = \int_{X_\psi}   e^{(2\lambda - 1)\psi(x) - \lambda  \langle \nabla \psi, x\rangle } \left(  \det \left(  \nabla^2 \psi(x) \right)\right)
^\lambda dx.
\end{equation}
Since $\det \left(  \nabla^2 \psi(x) \right) =0$ outside $X_\psi$, the integral may be taken on $D_\psi$ for $\lambda >0$. 
In particular,   
$$
as_0(\psi)=   \int_{X_\psi}   e^{-\psi(x)} dx \   \   \text{ and }   \   \   as_1(\psi) =  \int_{X_{\psi^*}}   e^{-\psi^*(x)} dx.
$$
Assume  now that $\int x e^{-\psi(x)}dx=0$ or $\int x e^{-\psi^*(x)} dx = 0$.  Then we can apply the functional Blaschke Santal\'o inequality (\ref{BLSA}) and get
from (\ref{lambda1}) that for $\lambda \in [0,1]$, 
\begin{equation*}\label{lambda1,1}
as_\lambda (\psi)  \leq \  \left( 2 \pi \right)^{n \lambda} \left(  \int_{\mathbb{R}^n} e^{-\psi(x)} dx \right)^{1- 2 \lambda}.
 \end{equation*}
 Similarly, for $\lambda \leq 0$, we get from (\ref{lambda2})
 \begin{equation*}\label{lambda2,1}
as_\lambda (\psi)  \geq \  \left( 2 \pi \right)^{n \lambda} \left(  \int_{\mathbb{R}^n} e^{-\psi(x)} dx \right)^{1- 2 \lambda},
 \end{equation*}
 provided that $\varphi \in C^2(\mathbb{R}^n)$, which is the assumption on $\varphi$ in inequality (\ref{thm00,1}). 
However, these  inequalities  hold without such a strong smoothness assumption.  This, together with characterization of equality,  was proved in  \cite{CFGLSW}.
\par
\begin{theo}\label{asa-lam}\cite{CFGLSW}
Let  $\psi: \mathbb{R}^n \rightarrow \mathbb{R} \cup \{\infty\}$ be a convex function.  For $\lambda \in [0,1]$,
\begin{equation} \label{lambda1,1}
as_\lambda (\psi)  \leq \  \left( 2 \pi \right)^{n \lambda} \left(  \int_{X_\psi}  e^{-\psi(x)} dx \right)^{1- 2 \lambda} 
\end{equation}
and for $\lambda \leq 0$,
\begin{equation} \label{lambda2,1}
as_\lambda (\psi)  \geq \  \left( 2 \pi \right)^{n \lambda} \left(  \int_{X_\psi}  e^{-\psi(x)} dx \right)^{1- 2 \lambda}.
\end{equation}
For $\lambda=0$ equality holds trivially in these inequalities. 
Moreover, for  $0 < \lambda \leq 1$, or $\lambda < 0$,  equality holds in above inequalities if and only if  $\psi(x) =  \frac{1}{2 } \langle Ax, x \rangle  +c$, where $A$ is a positive definite $n\times n$ matrix and $c$ is a constant.
\end{theo}
\vskip 3mm
A stability result for these inequalities is again an immediate  consequence of Theorem \ref{f-stable}. But again, we would then get the stability result for 
log concave  functions $\varphi \in C^2(\mathbb{R}^n)$ only, so we include the proof for general functions. 

\vskip 2mm
\begin{prop}\label{stab-affinefunctions}
Let  $\psi: \mathbb{R}^n \rightarrow \mathbb{R} \cup \{+\infty\}$ be a convex function  such that $\int x e^{-\psi(x)}dx=0$ or $\int x e^{-\psi^*(x)} dx = 0$. 
\par
\noindent
(i) Let $0 < \lambda \leq1$ and suppose that for some $\varepsilon \in (0, \varepsilon_0)$, 
$$as_\lambda (\psi) > (1-\varepsilon) ^\lambda \left( 2 \pi \right)^{n \lambda} \left(  \int_{X_\psi} e^{-\psi(x)} dx \right)^{1- 2 \lambda}.$$
\par
\noindent
(ii) Let $\lambda <0$ and suppose that for some $\varepsilon \in (0, \varepsilon_0)$, 
$$as_\lambda (\psi) < (1-\varepsilon) ^\lambda \left( 2 \pi \right)^{n \lambda} \left(  \int_{X_\psi}  e^{-\psi(x)} dx \right)^{1- 2 \lambda}.$$
Then, in both cases (i) and (ii),  there exists $c>0$ and a positive definite matrix $A$ such that 
$$
\int_{R(\varepsilon) B^n_2} \left|  \frac{\|x\|_2^2}{2} +c - \psi (Ax ) \right| dx < \eta \varepsilon ^{\frac{1}{129 n^2}}, 
$$
where $\lim_{\varepsilon \rightarrow 0} R(\varepsilon) = \infty$ and $\varepsilon_0, \eta, R(\varepsilon)$ depend on $n$.
\end{prop}
\vskip 2mm
\begin{proof} 
(i)  The case $\lambda=1$ is the stability case for the functional Blaschke Santal\'o inequality of  Theorem \ref{BaBoFr}. Therefore  we can assume that $0 < \lambda <1$. We put $d \mu = e^{-\psi} dx$.  By H\"older's inequality with $p=1/\lambda$ and $q=1/(1-\lambda)$,
\begin{eqnarray*}
as_\lambda (\psi) &=& \int_{X_\psi}    e^{\lambda (2 \psi (x)  - \langle \nabla \psi, x\rangle) } \left(  \det \left(  \nabla^2 \psi(x) \right)\right)
^\lambda d\mu \\
& \leq&  \left( \int_{X_\psi}   e^{2 \psi (x)  - \langle \nabla \psi, x\rangle }  \det \left(  \nabla^2 \psi(x) \right)
 d\mu\right)^\lambda \left( \int_{X_\psi}  d \mu \right) ^{1- \lambda} \\
&=&  \left( \int_{D_\psi}   e^{ \psi (x)  -  \langle \nabla \psi, x\rangle }  \det \left(  \nabla^2 \psi(x) \right)
 dx\right)^\lambda \left( \int_{X_\psi} e^{-\psi(x)}d x \right) ^{1- \lambda}\\
& \leq&
\left( \int_{\mathbb {R}^n} e^{-\psi^*(x)}d x \right) ^{ \lambda} \left( \int_{X_\psi}  e^{-\psi(x)}d x \right) ^{1- \lambda}, 
\end{eqnarray*}
where, in the last equality, we have used (\ref{legendreequality-gen}) and (\ref{variablechangemccann}).
Therefore, by the  assumption (i)  of the proposition 
\begin{eqnarray*}
\left( \int_{\mathbb {R}^n} e^{-\psi^*(x)}d x \right) ^{ \lambda} \left( \int_{X_\psi}  e^{-\psi(x)}d x \right) ^{1- \lambda} > (1-\varepsilon)^\lambda \left( 2 \pi \right)^{n \lambda} \left(  \int_{X_\psi}  e^{-\psi(x)} dx \right)^{1- 2 \lambda},
\end{eqnarray*}
which is equivalent to 
\begin{eqnarray*}
\left( \int_{\mathbb {R}^n} e^{-\psi^*(x)}d x \right)  \left( \int_{\mathbb {R}^n} e^{-\psi(x)}d x \right) >   \left( \int_{\mathbb {R}^n} e^{-\psi^*(x)}d x \right)  \left( \int_{X_\psi} e^{-\psi(x)}d x \right) > (1-\varepsilon)  \left( 2 \pi \right)^{n },
\end{eqnarray*}
and the result is again a consequence of Theorem \ref{BaBoFr}  by Barthe, B\"or\"oczky and Fradelizi \cite{BaBoFr}.
\par
Similarly, in the case (ii) the proposition follows by applying the reverse H\"older inequality.
\end{proof}
The following Blaschke Santal\'o  type inequality follows directly from inequality (\ref{lambda1,1}). It was also proved, together with its equality characterization in \cite{CFGLSW}.  
\begin{cor} \cite{CFGLSW}
Let $ \lambda \in [0, \frac{1}{2}]$ and let $\psi: \mathbb{R}^n \rightarrow \mathbb{R} \cup \{+\infty\}$ be a convex function such that $\int x e^{-\psi(x)}dx=0$ or $\int x e^{-\psi^*(x)} dx = 0$. Then
$$ 
  as_\lam(\psi) \ as_\lam ( (\psi^* ) )   \le (2\pi)^{n}  .
$$
Equality holds if and only if there exists $a\in\R$ and a positive definite matrix $A$ such that $\psi(x)=\frac{1}{2} \langle Ax,x\rangle+a$, for every $x\in\R^n$.
\end{cor}
\par
We have the following stability result as a direct consequence of
Theorem \ref{BaBoFr}.
\par
\begin{prop}\label{stab-affinefunctions2}
Let  $\psi: \mathbb{R}^n \rightarrow \mathbb{R} \cup \{+\infty\}$ be a convex function such that $\int x e^{-\psi(x)}dx=0$ or $\int x e^{-\psi^*(x)} dx = 0$. 
Let $0 \leq \lambda \leq \frac{1}{2}$ and suppose that for some $\varepsilon \in (0, \varepsilon_0)$, 
 $$ as_\lam(\psi) \ as_\lam ( (\psi^* ) )   \geq (1- \epsilon) (2\pi)^{n} .
 $$
\par
\noindent
Then, there exists $c>0$ and a positive definite matrix $A$ such that 
$$
\int_{R(\varepsilon) B^n_2} \left|  \frac{\|x\|_2^2}{2} +c - \psi (Ax ) \right| dx < \eta \varepsilon ^{\frac{1}{129 n^2}}, 
$$
where $\lim_{\varepsilon \rightarrow 0} R(\varepsilon) = \infty$ and $\varepsilon_0, \eta, R(\varepsilon)$ depend on $n$.
\end{prop}

\vskip 2mm

\vskip 5mm 
\noindent 
Elisabeth M. Werner\\
{\small Department of Mathematics \ \ \ \ \ \ \ \ \ \ \ \ \ \ \ \ \ \ \ Universit\'{e} de Lille 1}\\
{\small Case Western Reserve University \ \ \ \ \ \ \ \ \ \ \ \ \ UFR de Math\'{e}matique }\\
{\small Cleveland, Ohio 44106, U. S. A. \ \ \ \ \ \ \ \ \ \ \ \ \ \ \ 59655 Villeneuve d'Ascq, France}\\
{\small \tt elisabeth.werner@case.edu}\\ \\

\noindent 
Umut Caglar\\
{\small Department of Mathematics and Statistics\\
{\small Florida International University\\
{\small Miami, FL 33199, U. S. A. \\
{\small \tt ucaglar@fiu.edu}\\ \\

\end{document}
\bye